\newcommand{\GF}{\mathrm{GF}}
\newtheorem{conjecture}{Conjecture}[section]
\newtheorem{defn}{Definition}[section]
\newtheorem{theorem}[defn]{Theorem}
\newtheorem{lemma}[defn]{Lemma}
\def \d {\mathrm d}
\title{There is no upper bound for  the diameter of the commuting graph of a finite group}
\author{Michael Giudici }\author{Chris Parker}
\address{  School of Mathematics and Statistics\\
        The University of Western Australia\\
        35 Stirling Highway\\
		Crawley WA 6009, Australia}
	\email{michael.giudici@uwa.edu.au}
\address{
School of Mathematics\\
University of Birmingham\\
Edgbaston\\
Birmingham B15 2TT\\
United Kingdom} \email{c.w.parker@bham.ac.uk}
\begin{document}

\begin{abstract} We construct a family of finite special $2$-groups which have commuting graph of increasing diameter.
\end{abstract}
\maketitle

\section{Introduction}

For a group $G$, the \emph{commuting graph}  $\Gamma(G)$ of $G$ is the graph which has vertices the non-central elements of $G$ and two distinct vertices of $\Gamma(G)$ are adjacent if and only if they commute in $G$. In \cite{I},
 Iranmanesh and  Jafarzadeh conjecture that the commuting graph  of a finite group is either disconnected or has diameter bounded above by a constant independent of the group $G$.  They support this conjecture by proving that  the commuting graph of  $\mathrm{Sym}(n)$ and $\mathrm {Alt}(n)$ is either disconnected or has diameter  at most $5$. The conjecture is verified by the second author for the special case of soluble groups with trivial centre in \cite{P} where it is shown that the appropriate constant for such groups is $8$. This followed earlier work of Woodcock \cite{W} and Giudici  and Pope  \cite{G}. Further support for the conjecture is provided by the
work of Segev and Seitz  which  demonstrates that the commuting graph of a classical simple group defined over a field of order greater than $5$ is either disconnected or has  diameter at most $10$ and at least $4$ \cite[Corollary (pg. 127), Theorem 8]{SS}. In addition  they show  the commuting graph of the exceptional Lie type groups other than $\mathrm E_7(q)$ and the sporadic simple groups are disconnected \cite[Theorem 6]{SS}.
In  \cite{Heg} Hegarty and Zhelezov suggest a construction of a class of $2$-groups motivated by probabilistic methods aimed at providing a counter example to the    Iranmanesh and  Jafarzadeh conjecture. Though as yet unsuccessful, their putative examples motivated the examples presented in this article. Their supporting calculations yielded a group with commuting graph having diameter 10, the largest known diameter in the literature.  Our theorem is as follows.

\begin{theorem}\label{1} For all positive integers $d$, there exists a finite special $2$-group $G$ such that the commuting graph  of $G$ has diameter greater than $d$.
\end{theorem}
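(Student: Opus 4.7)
The plan is to reduce the problem to a combinatorial question about alternating bilinear forms and then construct an explicit such form. In a special $2$-group $G$, both $Z = Z(G)$ and $V = G/Z$ are elementary abelian over $\F_2$, and the commutator descends to an alternating $\F_2$-bilinear form $f \colon V \times V \to Z$; two non-central $x,y$ commute precisely when $f(xZ,yZ) = 0$. Since the elements of a fixed non-trivial coset of $Z$ commute pairwise and share the same neighbourhood in $\Gamma(G)$, it is standard that $\mathrm{diam}(\Gamma(G))$ equals the diameter of the ``orthogonality graph'' $\Delta$ on $V \setminus \{0\}$ with $u \sim u'$ iff $u \neq u'$ and $f(u,u') = 0$. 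One then constructs $(V,Z,f)$ with $\mathrm{diam}(\Delta) > d$ and realises it as a special $2$-group via the standard cocycle recipe $(v,z)(v',z') := (v+v',\,z+z'+g(v,v'))$ on $V \oplus Z$, with $g$ any bilinear function satisfying $g(u,v)+g(v,u) = f(u,v)$; non-degeneracy of $f$ together with its image spanning $Z$ forces $Z(G) = \Phi(G) = G' = 0 \oplus Z$, making $G$ special.

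Concretely, I would take $V$ with basis $e_1,\ldots,e_n$ (for $n = d+2$) and $Z$ with a formal basis $\{z_{\{i,j\}} : 1 \leq i < j \leq n,\ j - i \geq 2\}$, and set $f(e_i,e_j) = z_{\{i,j\}}$ when $j - i \geq 2$ and $f(e_i,e_j) = 0$ otherwise. Because the $z_{\{i,j\}}$ are linearly independent in $Z$, the equation $f(v,w) = 0$ for $v = \sum_i a_i e_i$ and $w = \sum_i b_i e_i$ is equivalent to $a_i b_j = a_j b_i$ in $\F_2$ for every $i < j$ with $j - i \geq 2$. Attach to each index the type $t_i = (a_i,b_i) \in \F_2^2$; the system forces $t_i = t_j$ whenever $t_i$ and $t_j$ are both non-zero and $|i-j| \geq 2$. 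A short case analysis, using that three pairwise distinct non-zero vectors of $\F_2^2$ cannot be realised at three indices lying pairwise within distance $1$ in $\{1,\ldots,n\}$, then shows that for $v \neq w$ with $f(v,w) = 0$ the set $S = \{i : t_i \neq 0\}$ satisfies $|S| \leq 3$ and $\max S - \min S \leq 2$.

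The key corollary is that if $v \sim w$ in $\Delta$ and $M(u) := \max\{i : a_i = 1\}$ for $u = \sum_i a_i e_i$, then $|M(v) - M(w)| \leq 1$: a jump $M(v) = m$, $M(w) = m + 2$ would force $t_{m+2} = (0,1)$ (since $m+2 \notin \mathrm{supp}(v)$) and $t_m = (1,\ast)$, violating type-equality at distance $2$. Consequently any walk from $e_1$ to $e_n$ in $\Delta$ has length at least $n - 1 = d + 1$, so $\mathrm{diam}(\Gamma(G)) \geq d + 1 > d$. The anticipated main obstacle is the type analysis of the preceding paragraph: it must rule out not merely the obvious direct shortcuts between distant basis vectors but every possible short walk via $\F_2$-linear combinations of the $e_i$. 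The cocycle realisation of $(V,Z,f)$ as a special $2$-group and the reduction of $\Gamma(G)$ to $\Delta$ are both routine.
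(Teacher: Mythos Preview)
Your reduction to the orthogonality graph $\Delta$ on $V\setminus\{0\}$ and the cocycle realisation are fine, and your type analysis for adjacency is correct: for $v\ne w$ both nonzero with $f(v,w)=0$, the set $S=\{i:t_i\ne 0\}$ really is contained in two or three consecutive indices, and the bound $|M(v)-M(w)|\le 1$ follows. The problem is that you never check connectivity, and in fact your graph is \emph{disconnected}. Take $n\ge 4$ and $v=e_1+e_4$. Running your own criterion, any $w$ with $f(v,w)=0$ must have $\mathrm{supp}(v)\cup\mathrm{supp}(w)$ of span at most $2$, but $\mathrm{supp}(v)=\{1,4\}$ already has span $3$; hence $w\in\{0,v\}$ and $v$ has no neighbour in $\Delta$. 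In $\Gamma(G)$ the coset $(e_1+e_4)Z$ is then a clique with no edges to the rest of the graph. So your construction does not produce connected commuting graphs of large diameter; it produces disconnected ones, which is exactly the alternative the Iranmanesh--Jafarzadeh conjecture allows and which the theorem is meant to rule out.

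The underlying reason is the size of $Z$. By making all the $z_{\{i,j\}}$ linearly independent you force the orthogonality relation to be extremely rigid, which gives you the easy diameter bound but destroys connectivity. The paper's construction goes the other way: it takes $\dim Z=\dim V-2$ and sets $f(x_i,x_j)=y_{|i-j|-1}$, so many commutators coincide. This extra slack is precisely what makes the graph connected (the paper's Lemma~\ref{connected} uses $\dim V-\dim W\ge 2$ to get $|C_{H_m}(a)/Z_m|\ge 4$), at the cost of a more delicate diameter argument: one only gets that the maximal index appearing at distance $k$ from $x_1$ is at most $2^{k-1}+1$, so the diameter grows like $\log_2 m$ rather than linearly. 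To repair your approach you would need to impose enough linear relations among the $z_{\{i,j\}}$ to recover connectivity while retaining a lower bound on path lengths --- which is essentially what the paper does.
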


Theorem~\ref{1} proves that the  Iranmanesh and  Jafarzadeh conjecture is false. However, we believe that  it is most probably true that the commuting graph of a finite group with trivial centre is either disconnected or has diameter bound above by a constant.
 We are confident enough in this judgment to formulate it as a formal conjecture:
 \begin{conjecture}
There is an absolute constant $d$ such that if $G$ is a finite group with trivial centre, then the commuting graph of $G$ is either disconnected or has diameter at most $d$.
\end{conjecture}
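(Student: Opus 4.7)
The natural plan is to combine the classification of finite simple groups with the structure theorem for the generalised Fitting subgroup $F^*(G)=F(G)E(G)$. Let $G$ be a finite group with $Z(G)=1$, and suppose $\Gamma(G)$ is connected; the goal is to bound $\mathrm{diam}(\Gamma(G))$ by an absolute constant. The argument would proceed in three regimes: (i) $G$ is almost simple; (ii) $F(G)=1$, so $F^*(G)=E(G)$ is a direct product of non-abelian simple groups and $G$ embeds in $\mathrm{Aut}(E(G))$; and (iii) $F(G)\ne 1$. The hypothesis $Z(G)=1$ gives $C_G(F^*(G))=Z(F^*(G))\le F(G)$, which in regime (ii) is trivial and in regime (iii) is a large "hub" available for routing paths.

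For the almost simple case, the work of Segev and Seitz \cite{SS} already yields diameter at most $10$ when the socle is a classical group over $\F_q$ with $q>5$ and $\Gamma(G)$ is connected, and shows that most exceptional and all sporadic cases are disconnected. A complete proof would have to fill in the remaining cases: classical groups over small fields, $\mathrm{E}_7(q)$, alternating groups (where \cite{I} already gives the bound $5$), and the Suzuki and Ree families $^2B_2$, $^2G_2$, $^2F_4$. In each case one expects either disconnectedness or a small explicit diameter bound, giving a uniform constant $d_0$ for all connected commuting graphs of almost simple groups.

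For regime (ii), write $E(G)=T_1\times\cdots\times T_k$. When $k\ge 2$, any non-identity $t\in T_i$ commutes with every element of $\prod_{j\ne i}T_j$, producing abundant hubs that connect the components through diagonal and factor-wise centralisers; one thereby reduces, at the cost of a bounded additive overhead, to the almost-simple bound $d_0$ applied inside each $\mathrm{Aut}(T_i)$. Elements $g\in G$ that permute the $T_i$ non-trivially centralise a diagonal subgroup of $\prod_{i\in\mathrm{supp}(g)}T_i$, again giving hub access into each factor. For $k=1$ one is in regime (i) directly, since $G\le\mathrm{Aut}(T_1)$ is almost simple.

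Regime (iii) is the principal obstacle. Here $Z(G)=1$ forces $G$ to act non-trivially on every abelian minimal normal subgroup $A$, but each such $A$ is a hub of enormous centraliser $C_G(A)$, which decomposes under the $G/C_G(A)$-action. The second author's theorem \cite{P} controls the purely soluble centreless case with diameter at most $8$, and this should splice with the semisimple analysis via the chief series. The difficulty is that an element $g\in G$ may centralise no simple component of $E(G)$ and no non-trivial abelian normal subgroup, so connecting $g$ to the rest of $\Gamma(G)$ in a bounded number of steps demands a uniform "interaction" lemma describing the joint action of $F(G)$ and $E(G)$ under both coprime and non-coprime conditions. Producing such a lemma, together with closing the remaining simple-group cases in \cite{SS}, appear to be the principal hurdles standing between the current literature and a proof of the conjecture.
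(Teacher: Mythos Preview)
The statement you are trying to prove is labelled in the paper as a \emph{conjecture}, not a theorem; the authors offer no proof and explicitly present it as an open problem motivated by the main theorem of the paper (which goes the other way, disproving the Iranmanesh--Jafarzadeh conjecture by constructing $2$-groups with unbounded commuting-graph diameter). There is therefore no proof in the paper to compare your attempt against.

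As for the attempt itself: what you have written is not a proof but a programme. You yourself concede this in the final paragraph, identifying as ``principal hurdles'' both the missing almost-simple cases (classical groups over small fields, $\mathrm{E}_7(q)$, the Suzuki and Ree families) and, more seriously, the absence of any ``interaction lemma'' handling elements in regime~(iii) that centralise neither a component of $E(G)$ nor a nontrivial abelian normal subgroup. These are not minor technicalities to be patched; they are the substantive content of the conjecture. In regime~(ii) your assertion that elements permuting the $T_i$ nontrivially centralise a diagonal subgroup is also not generally correct without further argument (consider an outer automorphism permuting non-isomorphic actions), and even when such a diagonal exists the passage to a bounded additive overhead is asserted rather than shown. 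Until those pieces are supplied, the proposal remains a plausible strategy outline, not a proof, and the statement remains open.
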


We remark, that if the definition of the commuting graph of a group is revised so that the vertices of the graph are \emph{all} the non-trivial elements of $G$, then our conjecture is that the modified commuting graph of a finite group $G$ is either disconnected or has diameter bounded above by a constant independent of $G$.

\subsection*{Acknowledgement} The authors are grateful to the Banff International Research Station for supporting the Groups and Geometries conference in September 2012. The research reported in this article is a direct result of discussions  initiated at that research meeting. The first author's work on this research was supported by an ARC Discovery Project.

\section{The construction}

Suppose that $m$ is an integer with $m \ge 3$ and $V_m$  and $W_m$ are vector spaces defined over $\GF(2)$ which have dimension  $m$ and $m-2$ respectively.  Assume $x_1, \dots, x_m$
is an ordered basis for $V_m$ and $y_1, \dots, y_{m-2}$ is an ordered basis for $W_m$.
 Let $f_m : V_m \times V_m \rightarrow W_m$ be the bilinear map defined by the bilinear extension of the following map $$f_m(x_i,x_j) = \begin{cases} 0& j\in \{i, i+1\}\\  y_{j-i-1}&i+2 \le j \le m\\0&i > j \end{cases}.$$Because $f_m$ is  bilinear it is immediate that it is a $2$-cocycle.
  Therefore  we can define the group $H_m$ which has underlying set $V_m\times W_m$ and multiplication  defined as follows: for $(a,b), (c,d) \in H_m$,  $$(a,b)\cdot (c,d) = (a+c,f_m(a,c)+ b+d).$$ Then  $H_m$ is a central extension of $V_m$ by $W_m$. Furthermore, as $f_m$ is non-zero,  $H_m$ is a nilpotent group   of class $2$. Note that the identity of $H_m$ is $(0,0)$.

We  calculate $(x_i,0)(x_i,0) = (0,f_m(x_i,x_i))=(0,0)$ so that $(x_1,0), \dots, (x_m,0)$ are involutions and \begin{eqnarray*}[(x_i,0),(x_j,0)]&=& (x_i,0)(x_j,0)(x_i,0)(x_j,0)\\&=& \begin{cases} (0,f_m(x_i,x_j))= (0,y_{j-i-1})& i+1<j\\
(0,f_m(x_i,x_j))= (0,y_{i-j-1})&i>j+1\\(0,0)&\text{otherwise}\end{cases}.\end{eqnarray*}

The following lemma is elementary to prove.
\begin{lemma}\label{Hmfacts} Assume that $m \ge 4$. Then the following hold:
\begin{enumerate}
\item[(i)] We have $C_{H_m}((x_1,0) ) = \langle (x_1,0), (x_2,0), (0,w)\mid w \in W_m\rangle$ and $C_{H_m}((x_m,0) ) = \langle (x_m,0), (x_{m-1},0), (0,w)\mid w \in W_m\rangle$.
\item[(ii)]$[H_m,H_m] = Z(H_m) = \{(0,w)\in H_m\mid w \in W_m\}$ has order $2^{m-2}$.
\item [(iii)]$X = \{(v,0) \in H_m \mid v \in V_m\}$ is a transversal to $Z(H_m)$ in $H_m$.
\item [(iv)] $\langle (x_1,0), \dots, (x_{m-1},0)\rangle \cong  H_{m-1}$.
 \end{enumerate}
\end{lemma}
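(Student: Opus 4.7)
My plan is to base the whole computation on the commutator formula
\[
[(a,b),(c,d)] = (0, g_m(a,c)), \qquad g_m(a,c) := f_m(a,c) + f_m(c,a),
\]
which follows immediately from the multiplication rule: $(a,b)(c,d)$ and $(c,d)(a,b)$ differ exactly by the central element $(0,g_m(a,c))$, and the inverses in $[x,y]=x^{-1}y^{-1}xy$ do not contribute because this element is central and $H_m$ has class $2$. Reading off the defining formula for $f_m$, one sees $g_m(x_i,x_j) = y_{|i-j|-1}$ when $|i-j|\ge 2$ and $0$ otherwise; in particular $g_m$ is a $W_m$-valued symmetric bilinear form on $V_m$.

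For (ii), the commutator identity gives $[H_m,H_m] \subseteq \{0\}\times W_m$, and the commutators $[(x_1,0),(x_{k+2},0)] = (0,y_k)$ for $k=1,\dots,m-2$ already span all of $W_m$, so equality holds and $|[H_m,H_m]|=2^{m-2}$. Class $2$ gives $[H_m,H_m]\le Z(H_m)$, so it remains to show the reverse containment, equivalently that $g_m$ has trivial radical. Given $a=\sum_i a_i x_i$ with $g_m(a,c)=0$ for all $c$, I would first pair against $x_1$ to obtain $\sum_{i\ge 3} a_i y_{i-2}=0$; linear independence of the $y_j$ then forces $a_i=0$ for $i\ge 3$. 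Pairing the remaining $a_1 x_1 + a_2 x_2$ against $x_3$ kills $a_1$ and against $x_4$ kills $a_2$, and we are done.

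Parts (i), (iii), (iv) follow quickly from this analysis. For (i), $(a,b)$ centralizes $(x_1,0)$ iff $g_m(a,x_1)=0$, which by the same pairing computation is equivalent to $a\in \la x_1,x_2\ra$; hence the centralizer is generated by $(x_1,0)$, $(x_2,0)$ together with $Z(H_m)$, and the assertion for $(x_m,0)$ is symmetric (pair with $x_m$ instead of $x_1$). Part (iii) is a counting check: the $2^m$ sets $(v,0)Z(H_m)=\{v\}\times W_m$ for $v\in V_m$ partition $H_m$. For (iv), the restriction of $f_m$ to $\la x_1,\dots,x_{m-1}\ra \times \la x_1,\dots,x_{m-1}\ra$ lands in $\la y_1,\dots,y_{m-3}\ra$ and, under the obvious identification of ordered bases, coincides with $f_{m-1}$; the generator correspondence therefore extends to an isomorphism onto $H_{m-1}$. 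The only non-routine step is the non-degeneracy of $g_m$ in part (ii), but even this is just exploiting the near-triangular shape of the Gram data for $g_m$ obtained by peeling off one coordinate at a time.
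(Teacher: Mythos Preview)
Your argument is correct and essentially the same as the paper's: both rely on the commutator identity $[(a,b),(c,d)]=(0,f_m(a,c)+f_m(c,a))$ and on pairing against $x_1$ to pin down centralizers, with (iii) and (iv) dismissed as routine. The only organizational difference is that the paper proves (i) first and then obtains $Z(H_m)=\{0\}\times W_m$ by intersecting $C_{H_m}((x_1,0))$ with $C_{H_m}((x_m,0))$, whereas you establish the non-degeneracy of $g_m$ directly (pairing against $x_1,x_3,x_4$) and read off (i) afterwards; this is a cosmetic reordering rather than a different route.
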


\begin{proof} To see (i), first note that $C_{H_m}((x_1,0) ) \ge \langle  (0,w)\mid w \in W_m\rangle$. Suppose that   $(v,0) \in C_{H_m}((x_1,0) ) $ with $v \in V_m \setminus \{0\}$.
Write $v= x_{i_1}+ \dots + x_{i_r}$ with $1 \le i_1\le \dots  \le i_r$. If $i_r\ge  3$, then $[(x_1,0),(v,0)]= (0,y_{i_1-2} + \dots + y_{i_r-2}) \ne (0,0)$. Thus   $i_r \le 2$ and this proves the first part of (i). The proof of the  second part is similar.

  Clearly $$[H_m, H_m] = \langle (0,y_{j-i-1}) \mid 1 \le i < j \le m \rangle = \{(0,w) \mid w \in W_m\}  \le Z(H_m).$$
On the other hand, by (i), as $m \ge 4$,  $$Z(H_m) \le C_{H_m}((x_1,0) ) \cap C_{H_m}((x_m,0) ) = \{(0,w) \mid w \in W_m\} .$$ So (ii) holds.

Parts (iii) and (iv) are obvious.
\end{proof}

We now commence with the investigation of the commuting graph $\Gamma_m = \Gamma(H_m)$ of $H_m$.   We define a subgraph $\Gamma_m^*$ of $\Gamma_m$. The vertices of $\Gamma_m^*$ are the non-trivial  elements of the transversal $X = \{(v,0) \in H_m \mid v \in V_m\}$ to $Z(H_m)$ and two elements of $X\setminus\{(0,0)\}$ are joined if and only if they commute.
Then $\Gamma_m$ is the lexicographic product of  ${\Gamma}_m^*$ and the complete graph on $|Z(H_m)|$ vertices  \cite{vahidi}. Thus the diameter of $\Gamma_m$ is equal to the diameter of ${\Gamma}^*_m$.  In particular,  to prove Theorem~\ref{1}, it suffices to prove that for every natural number $d$, there exists $m$ such that  $\Gamma_m^*$ has diameter greater than $d$. This is now our objective. To make the notation less unwieldy we abbreviate the elements $(x_i,0)$, $1 \le i \le m$,  by $x_i$ and $(0,y_i)$, $1 \le i \le m-2$, by $y_i$ expecting that no significant confusion will occur. We also set $Z_m = Z(H_m)$.

We know that $\Gamma_4^*$ has 15 vertices and  elementary calculations yield that it has a graphical representation  as follows:\begin{center}

\setlength{\unitlength}{.1in}
\begin{picture}(13, 15)

\put(2,3){\line(1,0){2}}
\put(4,6){\line(1,0){6}}
\put(4,3){\line(0,1){3}}
\put(2,3){\line(2,3){2}}
\put(4,6){\line(-3,1){3}}
\put(4,6){\line(-1,1){3}}
\put(1,7){\line(0,1){2}}
\put(4,6){\line(1,1){3}}
\put(10,6){\line(-1,1){3}}
\put(7,9){\line(-1,1){3}}
\put(7,9){\line(1,1){3}}
\put(7,9){\line(-3,1){3}}
\put(7,9){\line(3,1){3}}
\put(4,10){\line(0,1){2}}
\put(10,10){\line(0,1){2}}
\put(10,6){\line(1,1){3}}
\put(10,6){\line(3,1){3}}
\put(10,6){\line(2,-3){2}}
\put(10,6){\line(0,-1){3}}
\put(10,3){\line(1,0){2}}
\put(13,7){\line(0,1){2}}
\put(13,7){\circle*{.3}}
\put(13,9){\circle*{.3}}

\put(10,3){\circle*{.3}}
\put(10,6){\circle*{.3}}
\put(7,9){\circle*{.3}}
\put(4,10){\circle*{.3}}
\put(1,7){\circle*{.3}}
\put(4,6){\circle*{.3}}
\put(4,3){\circle*{.3}}
\put(2,3){\circle*{.3}}

\put(1,7){\circle*{.3}}
\put(1,9){\circle*{.3}}

\put(4,12){\circle*{.3}}
\put(12,3){\circle*{.3}}
\put(10,6){\circle*{.3}}
\put(10,12){\circle*{.3}}
\put(10,10){\circle*{.3}}

\end{picture}
\end{center}\vspace{-5mm} \centerline{The graph $\Gamma_4^*$.}
Therefore $\Gamma_4^*$ is connected and has diameter $3$.

The proof that $\Gamma_m$ is connected only uses the fact that $\dim V_m - \dim W_m \ge 2$ and the connectivity of smaller graphs.
\begin{lemma}\label{connected} For all $m \ge 4$,
$\Gamma_m$ is connected.
\end{lemma}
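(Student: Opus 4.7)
The plan is to prove the stronger statement that $\Gamma_m^*$ is connected for all $m\ge 4$, by induction on $m$; by the lexicographic product description given just before the lemma this immediately yields connectivity of $\Gamma_m$. The base case $m=4$ is the explicit graph displayed above.

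For the inductive step, fix $m\ge 5$ and assume $\Gamma_{m-1}^*$ is connected. By Lemma~\ref{Hmfacts}(iv), $A=\langle (x_1,0),\dots,(x_{m-1},0)\rangle$ is isomorphic to $H_{m-1}$, and since commuting inside $A$ agrees with commuting inside $H_m$, the vertices $(v,0)\in\Gamma_m^*$ with $v=\sum_{i=1}^m a_ix_i$ and $a_m=0$ induce a copy of $\Gamma_{m-1}^*$ in $\Gamma_m^*$, which is connected by the inductive hypothesis. It therefore suffices to exhibit, for every $(v,0)\in\Gamma_m^*$ with $a_m=1$, a neighbour in this induced subgraph.

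Fix such a $v$. Because the commutator in $H_m$ is $\F_2$-bilinear in the $V_m$-components with values in $W_m$, the map $\phi_v\colon V_m\to W_m$, $w\mapsto [v,w]$, is $\F_2$-linear, and rank--nullity gives
\[
\dim\ker\phi_v\ge \dim V_m-\dim W_m = m-(m-2) = 2.
\]
Let $U=\{w\in V_m : a_m(w)=0\}$, a hyperplane of dimension $m-1$; then
\[
\dim(\ker\phi_v\cap U)\ge \dim\ker\phi_v+\dim U-\dim V_m\ge 1,
\]
so $\ker\phi_v\cap U$ contains a non-zero vector $w$. Then $(w,0)$ is a non-trivial vertex in the $H_{m-1}$-subgraph above, and $[v,w]=0$ says $(v,0)$ and $(w,0)$ commute in $H_m$, giving the required edge and completing the induction.

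The main obstacle is simply recognising the correct reduction; once one sees that $\phi_v$ is linear, the proof becomes a two-line dimension count and uses no combinatorial detail of $f_m$ beyond bilinearity. The slack of $2$ in $\dim V_m-\dim W_m$ is precisely what the argument consumes: one dimension for the hyperplane intersection and one to guarantee $w\ne 0$, matching the remark preceding the lemma that only $\dim V_m-\dim W_m\ge 2$ and the connectivity of $\Gamma_{m-1}^*$ are needed.
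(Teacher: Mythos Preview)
Your proof is correct and follows essentially the same strategy as the paper's: induction on $m$, with the inductive step reducing to a rank--nullity argument for the commutator map. The only cosmetic differences are that you work throughout with $\Gamma_m^*$ while the paper works with $\Gamma_m$ (using $J=\langle x_1,\dots,x_{m-1}\rangle Z_m$ in place of your induced $\Gamma_{m-1}^*$), and that you make the hyperplane-intersection step explicit via the dimension formula, whereas the paper phrases the same step as ``$|C_{H_m}(a)/Z_m|\ge 4$ forces $C_{H_m}(a)\cap J\not\le Z_m$'' using that $J$ has index~$2$.
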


\begin{proof}   We have already seen that $\Gamma_4^*$ is connected. Hence $\Gamma_4$ is connected. Assume $m > 4$  and  that $\Gamma_{m-1} $ is connected.

 Let $J = \langle x_1,\dots, x_{m-1}\rangle Z_m$. Then $J$ has index $2$ in $H_m$, $ \Gamma(J)$ is a subgraph of $\Gamma_m$ and $\Gamma(J) \cong \Gamma_{m-1} $ is connected.
Let $a \in H_m\setminus J$. It suffices to show  $C_{H_m}(a) \cap J \not \le Z_m$. This means we should show  $|C_{H_m}(a)/Z_m| \ge 4$.  The commutator map
$\phi:H_m/Z_m \rightarrow H_m'$  given by $bZ \mapsto [a,b]$ is a homomorphism
from  $H_m/Z_m \cong V_m$ of order $2^m$ to $H_m' \cong W_m $ which has order $2^{m-2}$
and so, indeed, $|C_{H_m}(a)/Z_m| \ge 4$ and therefore $\Gamma_m$  is connected.
\end{proof}

\begin{lemma} \label{unbound}Suppose that $d$ is an integer such that  $m>2^{d-1}$.  Assume that $w \in V(\Gamma_m^*)$ and $\mathrm d(x_1,w)= d$. If $x_n$ appears in the minimal expression for $w$, then $n \le 2^{d-1}+1$.
\end{lemma}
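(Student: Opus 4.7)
The plan is to induct on $d$, reducing the inductive step to the following \emph{doubling lemma}: if $u$ and $w$ are commuting vertices of $\Gamma_m^*$ whose supports $B$ and $A$ (in the basis $\{x_1,\dots,x_m\}$) satisfy $B \subseteq \{1,\dots,N\}$ with $N \ge 2$, then $\max A \le 2N - 1$. The base case $d = 1$ is immediate: $[x_1, w] = 0$, expanded via the formula for $f_m$, forces $A \subseteq \{1, 2\}$ and gives the bound $2 = 2^0 + 1$. For the inductive step at $d \ge 2$, I would pick a penultimate vertex $u$ on a geodesic from $x_1$ to $w$; the hypothesis $m > 2^{d-1}$ also gives $m > 2^{d-2}$, so the inductive hypothesis applied to $u$ at distance $d - 1$ yields $\max B \le N := 2^{d-2} + 1 \ge 2$, and the doubling lemma then delivers $\max A \le 2N - 1 = 2^{d-1} + 1$.

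For the doubling lemma, the formula for $f_m$ gives
$$[u, w] = \Bigl(0,\ \sum_{\substack{(i,j)\in B\times A \\ |i-j|\ge 2}} y_{|i-j|-1}\Bigr),$$
so $[u, w] = 0$ is equivalent to saying that, for every $k \ge 1$, the number of pairs $(i, j) \in B \times A$ with $|i - j| = k + 1$ is even. I would argue by contradiction: assume $n := \max A \ge 2N$, and prove by a secondary induction on $k = 0, 1, \ldots, N - 1$ that $k + 1 \notin B$, which contradicts $\emptyset \ne B \subseteq \{1, \ldots, N\}$. At step $k$ I would read off the coefficient of $y_{n - 2 - k}$ in $[u, w]$: pairs with $i - j = n - 1 - k$ are ruled out, since $i \le N$ and $n \ge 2N$ force $j = i - (n - 1 - k)$ to be nonpositive; pairs with $j - i = n - 1 - k$ force $i \le k + 1$ from $j \le n$, and the secondary hypothesis eliminates $i \le k$, leaving only the single candidate $(i, j) = (k + 1, n)$, which contributes precisely when $k + 1 \in B$. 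Vanishing of the coefficient therefore yields $k + 1 \notin B$ and closes the step.

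The main obstacle is to pinpoint exactly which coefficient of $y_\ell$ to extract so that the secondary induction unrolls cleanly; the remaining bookkeeping (that $n - 2 - k$ remains in the valid range $\{1, \dots, m - 2\}$, which uses $N \ge 2$ and $n \le m$, and that $m > 2^{d-1}$ propagates to $m > 2^{d-2}$ for the recursive call) is routine.
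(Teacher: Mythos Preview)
Your proof is correct and shares the same inductive skeleton as the paper: induct on $d$, settle $d=1$ by computing $C_{H_m}(x_1)$, and for the step pick a penultimate vertex $u$ on a geodesic from $x_1$ to $w$ and bound $\max A$ in terms of $\max B$ via the vanishing of $[u,w]$.

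The difference is entirely in how the doubling bound is extracted from $[u,w]=0$. The paper does it in a single stroke: writing $\alpha_1=\min B$, $\alpha_r=\max B$, $\beta_s=\max A$, the pair $(\alpha_1,\beta_s)$ contributes the extremal term $y_{\beta_s-\alpha_1-1}$, and any cancelling pair $(\alpha_t,\beta_u)$ must satisfy $\alpha_t>\beta_u$ (the alternative $\beta_u-\alpha_t=\beta_s-\alpha_1$ forces $(\alpha_t,\beta_u)=(\alpha_1,\beta_s)$), whence $\beta_s<\beta_s+\beta_u=\alpha_1+\alpha_t\le 2\alpha_r$. You replace this one-shot extremal argument by a secondary induction that reads off the coefficients of $y_{n-2},y_{n-3},\dots,y_{n-1-N}$ in turn to show $1,2,\dots,N\notin B$, emptying $B$ for a contradiction. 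Your route is longer but completely mechanical and avoids having to spot the right extremal pair; the paper's is slicker but terser. Both deliver the same numerical bound $\max A\le 2\cdot(2^{d-2}+1)-1=2^{d-1}+1$, so the conclusions agree.
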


\begin{proof}  We have $C_{H_m}(x_1) = \langle x_1, x_2\rangle Z_m$ from Lemma~\ref{Hmfacts} (i). Hence $|C_{H_m}(x_1)/Z_m|=4$ and the vertices incident to $x_1$ in $\Gamma_m^*$   are $x_1+x_2$ and $x_2$. Thus  the highest subscript involved in vertices at distance 1 from $x_1$  is $2= 2^{1-1}+1$.  So the result is true for vertices at distance $1$ from $x_1$. Assume that the result is true for vertices at distance   $k$ from $x_1$.  Let $w \in V(\Gamma_m^*)$ be such that $d(x_1,w)=k+1$ and $u \in V(\Gamma_m^*)$  be incident to $w$ and have distance $k$ from $x_1$.  Write $w = x_{\beta_1} +\dots +x_{\beta_s}$ and $u= x_{\alpha_1} +\dots +x_{\alpha_r}$ where $\alpha_1 \le \dots \le \alpha_r$ and $\beta_1 \le \dots\le \beta_s$. Since $\d(x_1,u) = k$, $\alpha_r \le 2^{k-1}+1$. Because $[u,w]=(0,0)$, we have $$\sum_{{1\le i \le r}\atop {1 \le j \le s}} [x_{\alpha_i},x_{\beta_j}]= \sum_ {{1\le i \le r}\atop {1 \le j \le s}} y_{|\alpha_i-\beta_j| -1}=(0,0),$$ where we assume  $y_\ell$ with $\ell \le 0$ is  $(0,0)$.
If $\beta_s \le \alpha_r+1 \le 2^{k-1}+2$, then there is nothing to prove.  Hence we may assume that $\beta_s \ge  \alpha_r +2 \ge \alpha_1+2$. As $ \sum_ {{1\le i \le r}\atop {1 \le j \le s}} y_{|\alpha_i-\beta_j| -1} = (0,0)$, there exists $\alpha_1 \le \alpha_t \le \alpha_r$ and $\beta_1 \le \beta_u \le \beta_s$ such that $$y_{\beta_s-\alpha_1-1} = \begin{cases} y_{\alpha_t-\beta_u-1}& \alpha_t > \beta_u\\
y_{\beta_u-\alpha_t-1}& \alpha_t <\beta_u
\end{cases}.$$
Since $\beta_s \ge \beta _u$ and $\alpha_1 \le \alpha_t$, the latter possibility is impossible. Thus   $ {\beta_s-\alpha_1-1} = \alpha_t-\beta_u-1$ which means  $$\beta_s < \beta_s+ \beta_u = \alpha_1+\alpha_t \le 2 \alpha_r \le 2(2^{k-1}+1).$$
Therefore $\beta_s \le 2^k+1$ and the result follows by induction.
\end{proof}

\begin{proof}[Proof of Theorem~\ref{1}]   Lemmas~\ref{connected} and \ref{unbound} show that for any given integer $d$, there exists a positive integer $m$ such that $\Gamma_m$ is connected of diameter greater than $d$.

\end{proof}

One final remark: computations show that   for $4 \le m\le 16 $  the diameter of $\Gamma_m$ is  $m-1$.

\end{document}